\newtheorem{tm}{Theorem}
\newtheorem{defi}[tm]{Definition}
\newtheorem{rem}[tm]{Remark}
\newtheorem{ex}[tm]{Example}
\newtheorem{prop}[tm]{Proposition}
\newtheorem{nota}[tm]{Notation}
\newtheorem{quest}[tm]{Question}
\begin{document}

\title{Descartes' rule of signs, canonical sign patterns and rigid orders of
  moduli}
\author{Vladimir Petrov Kostov}
\address{Universit\'e C\^ote d’Azur, CNRS, LJAD, France} 
\email{vladimir.kostov@unice.fr}

\begin{abstract}
  We consider real
  polynomials in one variable without vanishing coefficients and with
  all roots real and of distinct moduli.
  %By Descartes' rule of signs such a polynomial
  %with $c$ sign changes and $p$ sign preservations in the sequence
  %of its coefficients has $c$ positive and $p$ negative roots.
  We show that the signs of the coefficients define the order of
  the moduli of the roots on the real positive half-line exactly when  
no four
  consecutive signs of coefficients equal $(+,+,-,-)$, $(-,-,+,+)$, 
  $(+,-,-,+)$ or $(-,+,+,-)$.

  {\bf Key words:} real polynomial in one variable; hyperbolic polynomial; sign
  pattern; Descartes' 
rule of signs\\ 

{\bf AMS classification:} 26C10; 30C15
\end{abstract}
\maketitle

\section{Introduction}

We consider real univariate polynomials. When all roots of such a polynomial
are real, the polynomial is called {\em hyperbolic}. We are interested in
the {\em generic} case, i.~e. the case of
hyperbolic polynomials without multiple roots and with all coefficients
non-vanishing (hence without root at~$0$).

The present paper continues a recent activity in the research about
real univariate polynomials and their roots. The classical Descartes'
rule of signs applied to a real degree $d$
polynomial $Q$ without vanishing coefficients
states that the number $pos$ of its positive roots is bounded by the
number of sign changes $c$ in the sequence $\mathcal{A}$
of its coefficients the difference
$c-pos$ being even. The same rule applied to the polynomial $Q(-x)$ implies
that the number $neg$ of negative roots of $Q$ is not larger than the
number $p$ of sign preservations in~$\mathcal{A}$, $c+p=d$,
and the difference $p-neg$
is also even. For a hyperbolic polynomial one has $pos=c$ and $neg=p$. 

The problem for which  couples $(pos, neg)$ satisfying these
conditions can one
find a real polynomial with prescribed signs of its coefficients
seems to have been explicitly formulated for the first time in~\cite{AJS}.
The first non-trivial result occurs for $d=4$, see~\cite{Gr}. The exhaustive
answer to the question for $d=5$ and $6$ is provided in \cite{AlFu}; for
$d=7$ and $8$ it is given in  \cite{FoKoSh} and \cite{KoCzMJ}. For
$4\leq d\leq 8$, in all compatible but not realizable cases one has 
$pos=0$ or $neg=0$. For $d\geq 9$, there are compatible and non-realizable
couples with $pos\geq 1$ and $neg\geq 1$, see  \cite{KoMB} and~\cite{CGK}.
For a tropical analog of  Descartes' rule of signs a discussion can be found
in~\cite{FoNoSh}. Various problems concerning hyperbolic polynomials in one
variable are exposed in~\cite{Ko}.

\begin{defi}
  {\rm A {\em sign pattern} of length $d+1$ is a sequence
    of $d+1$ signs plus and/or minus. The polynomial
    $Q:=\sum _{j=0}^da_jx^j$, $a_j\in \mathbb{R}^*$,
    is said to define the sign pattern $\sigma (Q):=({\rm sgn}(a_d)$,
    ${\rm sgn}(a_{d-1})$, $\ldots$, ${\rm sgn}(a_0))$. Most often we set
    $a_d:=1$. In this case the first sign of $\sigma (Q)$ is a plus.}
\end{defi}

A natural question to ask about a degree $d$ generic hyperbolic polynomial
is: Suppose that the
moduli of all its $d$ roots are distinct.  
When these moduli are arranged on the real positive
half-axis, at which positions can the moduli of its negative roots be? This
question leads to the following formal definition:

\begin{defi}
{\rm An {\em order of moduli} of length $d$ is a string of $p$
  letters $N$ and $c$ letters $P$, $c+p=d$, separated by signs
  of inequality $<$.
    The letters $N$ and $P$ indicate the relative positions of the moduli
    of the negative and positive roots of the hyperbolic polynomial
    on $\mathbb{R}_+$. 
    For example, to say that the degree $7$ hyperbolic polynomial
    $Q$ defines (or realizes)
    the order of moduli}
    $$N<N<P<N<P<N<P$$
{\rm means that for the sign pattern $\sigma (Q)$, one has $c=3$, $p=4$,
  and that for
    the positive roots $\alpha _1<\alpha _2<\alpha _3$ and the negative roots
    $-\gamma _j$ of $Q$, one has
    $$\gamma _1<\gamma _2<\alpha _1<\gamma _3<\alpha _2<\gamma _4<\alpha _3~.$$
A given order of moduli {\em realizes} a given sign pattern if there
exists a hyperbolic polynomial which defines the given  order of moduli
and the given sign pattern.}
\end{defi}

For generic hyperbolic polynomials,
we give the exhaustive answer to the following question:

\begin{quest}\label{quest1}
  When does the sign pattern $\sigma (Q)$
  determine the order of moduli defined by the
  hyperbolic polynomial~$Q$?
\end{quest}

The answer is given in terms of the following definition:

\begin{defi}\label{deficanon}
  {\rm We define after each sign pattern of length $d+1$ its corresponding
    {\em canonical} order of moduli: the sign pattern is read from right to
    left and to each
    couple of opposite (resp. identical) consecutive signs one puts
    in correspondence the letter $P$
    (resp. $N$). Example: for $d=8$, to the sign pattern $(+,+,-,-,+,-,+,+,+,-)$
    corresponds the canonical order of moduli $P<N<N<P<P<P<N<P<N$.}
\end{defi}

\begin{rem}
  {\rm Each sign pattern is realizable by its canonical order of moduli, see
    Proposition~1 and its proof in~\cite{KoSe}. A sign pattern
    is called {\em canonical}
    if it is realizable only by its canonical order of moduli. So to 
    answer Question~\ref{quest1} means to say which sign patterns are canonical.}
\end{rem}

\begin{defi}\label{deficonfig}
  {\rm We call {\em configuration} any four consecutive components of a
    given sign pattern. For instance, the sign pattern
    from Definition~\ref{deficanon} contains $7$
    configurations the first, second and last of which are
    $(+,+,-,-)$, $(+,-,-,+)$ and $(+,+,+,-)$. We give names to the following
    four configurations:}
  $$A:=(+,+,-,-)~\, ,~\, B:=(-,-,+,+)~\, ,~\, C:=(+,-,-,+)~\, ,~\,
  D:=(-,+,+,-)~.$$
  {\rm It is clear that if the sign pattern $\sigma (Q(x))$
    contains the configuration
    $A$ or $B$ (resp. $C$ or $D$), then in the same positions the sign pattern
    $\sigma (Q(-x))$ contains
    the configuration $C$ or $D$ (resp. $A$ or~$B$).}
\end{defi}

Our main result is the following theorem:

\begin{tm}\label{tmmain}
  A sign pattern is canonical if and only if it contains none of
  configurations $A$~--~$D$.
  \end{tm}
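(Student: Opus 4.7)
The plan is to treat the two implications separately. For the ``only if'' implication I argue the contrapositive. The involutions $Q(x) \mapsto -Q(x)$ (inducing $A \leftrightarrow B$, $C \leftrightarrow D$) and $Q(x) \mapsto Q(-x)$ (inducing $A \leftrightarrow D$, $B \leftrightarrow C$) reduce the task to showing that every $\sigma$ containing $A = (+,+,-,-)$ is realized by a non-canonical order of moduli. As a seed I take the cubic $(x - 3/2)(x+2)(x+3) = x^3 + \tfrac72 x^2 - \tfrac32 x - 9$, whose sign pattern is $A$ but whose order of moduli is $P < N < N$, not the canonical $N < P < N$ of $A$. To realize an arbitrary $\sigma$ containing $A$ at prescribed positions, I multiply the seed by linear factors of very large modulus at the top of the pattern and of very small modulus at the bottom: for large $M$ the factor $(x + M)$ duplicates the current leading sign, while $(x - M)$ duplicates it and flips all signs below, and analogous rules hold at the bottom for $(x \pm \varepsilon)$ with small $\varepsilon$. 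Iterating these operations builds any prescribed $\sigma$ containing $A$, and the extremal moduli of the added roots keep the three seed roots interior in the final modulus ordering, so their non-canonical relative order persists.

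For the ``if'' direction the key input is the following \emph{Key Lemma}: if $Q(x) = (x^2 - \alpha^2) R(x)$ with $\alpha > 0$ and $R \in \mathbb{R}[x]$, and every coefficient of $Q$ is non-zero, then $\sigma(Q)$ contains at least one of $A, B, C, D$. Granting this, let $V_\sigma$ denote the open set of hyperbolic polynomials of degree $d$ with sign pattern $\sigma$ and distinct moduli, and $\mathcal{H} = \{(x^2 - \alpha^2) R(x) : \alpha > 0\}$ the hypersurface of hyperbolic polynomials with two equal-modulus roots. For $\sigma$ avoiding $A$--$D$, the Key Lemma gives $V_\sigma \cap \mathcal{H} = \emptyset$, so inside $V_\sigma$ two moduli can never coincide: a coincidence of same-sign roots is a double root (excluded by our genericity), and a coincidence of opposite-sign roots lies on $\mathcal{H}$. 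Therefore the order of moduli is locally constant on $V_\sigma$; a direct parametrization of $V_\sigma$ by ordered roots subject to the inequalities defining $\sigma$ shows it is path-connected, and since Remark~5 exhibits a canonical-order realization inside $V_\sigma$, every $Q \in V_\sigma$ realizes the canonical order.

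To prove the Key Lemma, I write $a_k = r_{k-2} - \alpha^2 r_k$ (with $r_j := 0$ outside $0 \le j \le \deg R$), so that the extremal signs $s_0 = -\mathrm{sgn}(r_0)$, $s_1 = -\mathrm{sgn}(r_1)$, $s_{d-1} = \mathrm{sgn}(r_{d-3})$ and $s_d = \mathrm{sgn}(r_{d-2})$ are fixed by $R$. Each interior sign $s_k$ depends on $\alpha$ only when $\mathrm{sgn}(r_{k-2}) = \mathrm{sgn}(r_k)$, in which case $s_k = \mathrm{sgn}(r_{k-2})$ for $\alpha^2 < r_{k-2}/r_k$ and $s_k = -\mathrm{sgn}(r_k)$ for $\alpha^2 > r_{k-2}/r_k$. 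In the simplest case, when the $r_k$ all share the same sign, Newton's inequalities for the hyperbolic polynomial $R$ order the critical values $r_{k-2}/r_k$ monotonically in $k$, so the middle signs of $\sigma(Q)$ split for any $\alpha$ into a ``low-$k$'' block with $s_k = \mathrm{sgn}(r_{k-2})$ and a ``high-$k$'' block with $s_k = -\mathrm{sgn}(r_k)$; a direct check shows that the four-tuple straddling this transition always lies in $\{A, B, C, D\}$, and in the degenerate cases where one of the two blocks is empty one recovers the bottom four-tuple $(\mathrm{sgn}(r_1), \mathrm{sgn}(r_0), -\mathrm{sgn}(r_1), -\mathrm{sgn}(r_0))$ or its top analogue, each of which lies in $\{A, B, C, D\}$ for every combination of signs. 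The principal obstacle is extending this argument to $R$ whose sign pattern is non-monotone: one must treat every maximal run of constant-sign coefficients of $R$ separately, re-establish the monotonicity of the critical ratios within each run via the appropriate form of Newton's inequality, and check that the combined block structure always presents an $A$--$D$ four-tuple somewhere in $\sigma(Q)$ regardless of $\alpha$.
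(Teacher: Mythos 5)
Your overall architecture for the ``if'' direction matches the paper's: connect two generic hyperbolic polynomials with the same sign pattern inside the (connected) set of all such polynomials, observe that a change of the order of moduli forces passage through a polynomial divisible by $x^2-\alpha^2$, and reduce everything to a statement about sign patterns of such products. (For the ``only if'' direction the paper simply cites \cite{KoSe}; your seed-and-dilation construction is a plausible sketch of an independent proof of that half.) The problem is your Key Lemma. As stated --- for arbitrary $R\in\mathbb{R}[x]$ --- it is false: take $R=x^3+x^2+\tfrac12x+2$ and $\alpha=1$; then $(x^2-1)R=x^5+x^4-\tfrac12x^3+x^2-\tfrac12x-2$ has all coefficients non-zero, sign pattern $(+,+,-,+,-,-)$, and no occurrence of $A$--$D$. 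What saves the paper's version of this statement (Proposition~\ref{proptmmain}) is a combinatorial hypothesis you have dropped: the conclusion is asserted only for sign patterns in $T_d(\sigma(W_*))$, i.e.\ those with $c(\sigma(W))=c(\sigma(W_*))+1$. In the application this holds automatically because $W$ and $W_*$ are hyperbolic, so Descartes' rule is exact and the factor $x^2-\alpha^2$ contributes exactly one extra sign change. The counterexample above has $c(\sigma(W))=3$ while $c(\sigma(W_*))+1=1$, so it is excluded by that hypothesis. Your proposed proof via Newton's inequalities nowhere uses this hypothesis (nor hyperbolicity of $Q$, only of $R$), so it cannot establish the statement your wall-crossing argument needs.

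Even with the statement corrected, your proof of it is carried out only in the case where all coefficients of $R$ have the same sign --- which is the paper's Example~\ref{explus} --- and you explicitly defer the general case (``the principal obstacle is extending this argument to $R$ whose sign pattern is non-monotone''). That general case is where all the work lies: Newton's inequalities control ratios only within a maximal constant-sign run of the $r_k$, the relevant thresholds $r_{k-2}/r_k$ couple coefficients two apart and so are not governed by log-concavity across sign changes of $\sigma(R)$, and the interaction of several runs with the single parameter $\alpha^2$ is precisely the combinatorial difficulty. The paper resolves it by induction on $d$ (base cases $3\le d\le 6$ checked by hand in Examples~\ref{exd3}--\ref{exd6}), comparing the coefficient lists of $W_d$ and $W_{d+1}$ and using the sign-change count to kill the two residual cases. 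Until you either complete the multi-run analysis or replace it by such an induction, the ``if'' direction is not proved. A smaller point: the connectedness of the set of monic generic hyperbolic polynomials with a fixed sign pattern is not obtained by ``a direct parametrization by ordered roots'' --- the sign conditions cut that parameter space in a nontrivial way, and the paper has to invoke \cite{KoCo} for this step.
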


\begin{rem}
  {\rm (1) The ``only if'' part of the theorem is proved in~\cite{KoSe} (see
    Theorem~2 therein),
    so we prove only the ``if'' part. The formulation of the result of
    \cite{KoSe} is not given in terms of the configurations $A$~--~$D$, but
    is equivalent to such a formulation. In~\cite{KoPuMaDe} it is shown that
    the following sign patterns with one or two sign changes are canonical:}

  $$\begin{array}{lll}
    (+,+,\ldots ,+,-)~,&&(+,-,-,\ldots ,-)~,\\ \\
    (+,+,\ldots ,+,-,+,+,\ldots ,+)&{\rm and}&
    (+,-,-,\ldots ,-,+)~.\end{array}$$

  {\rm (2) Each of configurations $A$ and $B$ contains an isolated sign
    change, i.e.
  a sign change between two sign
  preservations, and vice versa for configurations $C$ and~$D$. Thus one can
  reformulate Theorem~\ref{tmmain} as follows:}
  A sign pattern is canonical if and only if it contains no isolated
    sign change and no isolated sign preservation.
\end{rem}

We prove Theorem~\ref{tmmain} in Section~\ref{secprtmmain}. In
Section~\ref{seccomments} we discuss problems related to
Question~\ref{quest1}.

\section{Some related problems\protect\label{seccomments}}

It is obvious that one can ask the inverse to Question~\ref{quest1}:

\begin{quest}\label{quest2}
  When does an order of moduli determine the sign pattern?
  In other words, do there
  exist orders of moduli such that
  each of them realizes a single sign pattern? (We call such orders of moduli
  {\em rigid}.)
\end{quest}

The exhaustive answer to this question (given in~\cite{KoAr},
assuming that the leading coefficient of the generic hyperbolic polynomial
is positive) looks like this:

{\em These are the orders of moduli of the form $\cdots <P<N<P<N<\cdots$,
  and the
  corresponding sign patterns equal}

$$(+,+,-,-,+,+,-,-,\ldots )~~~\, {\it or}~~~\, (+,-,-,+,+,-,-,+,\ldots )~,$$
{\em and also the orders of moduli
  $\cdots <N<N<N<\cdots$ and $\cdots <P<P<P<\cdots$ the
  corresponding sign patterns being} 
$$(+,+,+,+,+,\ldots )~~~\, {\it and}~~~\, (+,-,+,-,+,\ldots )~.$$

The latter two sign patterns are trivially canonical, because all roots
are of the same
sign and hence one cannot compare moduli of roots of opposite signs.
For the former two sign patterns one
observes that every configuration of theirs is of the form $A$, $B$, $C$
or $D$.

Thus for any degree $d\geq 3$, there exist exactly two non-trivial
rigid orders of moduli and
corresponding sign patterns.
The number of canonical sign patterns, on the contrary,
increases with $d$ and tends to
infinity. To see this it suffices to notice that canonical are, in particular,
all sign patterns of the
form $([s_1],-,[s_2],-,[s_3],-,\ldots )$, where $[s_j]$ denotes a sequence of
$s_j$ consecutive signs plus, $s_j\geq 3$.

\section{Proof of Theorem~\protect\ref{tmmain}\protect\label{secprtmmain}}

In the proof of Theorem~\ref{tmmain} we use also
{\em generalized orders of moduli},
i.~e. orders of moduli in which one of the
signs of inequality~$<$ in a given order of moduli $\mathcal{S}$
is replaced by the sign of equality~$=$ with the
obvious meaning that the moduli of the corresponding two roots are equal.
We say that the thus obtained generalized order of moduli is {\em adjacent} to
the order of moduli~$\mathcal{S}$.

Consider a sign pattern $\sigma ^{\flat}$ of length $d+1$ and containing none of
configurations $A$~--~$D$. Then there exists a degree $d$ 
hyperbolic polynomial $U$ realizing
the sign pattern $\sigma ^{\flat}$ with the canonical order of
moduli $\mathcal{R}$
defined by $\sigma ^{\flat}$ 
(see \cite[Proposition~1]{KoSe}). Denote by $\Omega (\sigma ^{\flat})$ the
set of degree $d$ monic generic
hyperbolic polynomials whose coefficients define the
sign pattern $\sigma ^{\flat}$.
Hence the set $\Omega (\sigma ^{\flat})$ is open and contractible
(see \cite[Theorem~2]{KoCo}).

Suppose that the set $\Omega (\sigma ^{\flat})$ contains a polynomial $V$ which
defines an order of moduli different from $\mathcal{R}$. Then one can
connect the polynomials
$U$ and $V$ by a continuous path $\gamma \subset \Omega (\sigma ^{\flat})$.
The set $\Omega (\sigma ^{\flat})$ being open, one can choose $\gamma$ such that
for each point of it, in the corresponding polynomial there is at most one
equality between the moduli of a negative and of a positive root. Hence
there exists a polynomial $W^{\dagger}\in \gamma$ defining a generalized
order of moduli
adjacent to $\mathcal{R}$. Hence $W^{\dagger}$ is of the form
$(x^2-\alpha ^2)W_{\diamond}$,
where $\alpha >0$ and $W_{\diamond}$ is a degree $d-2$ monic
hyperbolic polynomial.
One can make the change
of variables $x\mapsto \alpha x$ after which the polynomial
$W:=W^{\dagger}/\alpha ^2$
takes the form

\begin{equation}\label{eqpolyW}
  W=(x^2-1)W_*~,~~~\,
  W_*:=x^{d-2}+u_1x^{d-3}+\cdots +u_{d-3}x+u_{d-2}~,~~~\, u_j\in \mathbb{R}~.
\end{equation}
Notice that the coefficients $u_j$ are not indexed in the same way 
as the coefficients $a_j$ above.

We want to prove that if the sign pattern $\sigma ^{\flat}$ contains none of
configurations $A$~--~$D$, then such a polynomial $W$ does not exist. Hence $V$
also does not exist which proves Theorem~\ref{tmmain}.

\begin{rem}\label{remconv}
  {\rm One can make two non-restrictive assumptions on $W$:
\vspace{1mm}

1) One can assume that $u_1>0$. Indeed, as
$W=x^d+u_1x^{d-1}+\cdots$, one has $u_1\neq 0$. If $u_1<0$, then
one can make the change $x\mapsto -x$ which preserves the factor $x^2-1$.
This means that to prove Theorem~\ref{tmmain} it suffices to consider only half
of the possible cases, the ones of sign patterns beginning with two pluses.
\vspace{1mm}

2) One can assume that $u_j\neq 0$, $2\leq j\leq d-2$. Indeed, if some
of the coefficients $u_j$ equals $0$, then for its nearby values the
coefficients $a_j$ retain the same signs, so the sign pattern of $W$
remains the same.}
\end{rem}

\begin{nota}\label{notaST}
  {\rm For a given sign pattern $\sigma ^{\triangle}$, we denote by
    $c(\sigma ^{\triangle})$ the number of its sign changes.
    For each sign pattern $\sigma (W_*)$ of length $d-1$ 
beginning with two pluses, we denote by 
$S_d(\sigma (W_*))$ the set of possible sign patterns $\sigma (W)$.
By $T_d(\sigma (W_*))\subseteq S_d(\sigma (W_*))$ we denote its subset
of sign patterns $\sigma (W)$ such that $c(\sigma (W))=c(\sigma (W_*))+1$.}
  \end{nota}

\begin{ex}\label{explus}
  {\rm Suppose that $d=5$ and $\sigma (W_*)=(+,+,+,+)$. The coefficients of
    $W$ equal}
  $$1~,~~~\, u_1~,~~~\, u_2-1~,~~~\, u_3-u_1~,~~~\, -u_2~,~~~\, -u_3~.$$
  {\rm One has $u_1>0$, $-u_2<0$ and $-u_3<0$. A priori the
    coefficients $u_2-1$ and $u_3-u_1$ can have any sign. Therefore the set
    $S_5(\sigma (W_*))$ consists of the four sign patterns of the form
    $(+,+,\pm ,\pm ,-,-)$. As $c(\sigma (W_*))=0$, for the sign patterns
    of $T_5(\sigma (W_*))$ one has $c(\sigma (W))=1$, so} 

  $$T_5(\sigma (W_*))=\{ (+,+,+,+,-,-)~,~(+,+,+,-,-,-)~,~(+,+,-,-,-,-)\} $$
  {\rm and $S_5(\sigma (W_*))\setminus T_5(\sigma (W_*))=\{ (+,+,+,-,+,-)\}$.
    We consider only polynomials $W$ with all coefficients non-vanishing,
    therefore the possibilities to have $u_2-1=0$ and/or $u_3-u_1=0$ are
    not taken into account.

  More generally, suppose that $d\geq 3$.
  If $u_j>0$, $1\leq j\leq d-2$ (see (\ref{eqpolyW}), then
  $c(\sigma (W_*))=0$. 
    The first two and the last two coefficients of $W$ equal
    $1>0$, $u_1>0$ and $-u_{d-3}<0$, $-u_{d-2}<0$ respectively. 
    If $\sigma (W)\in T_d(\sigma (W_*))$ (so $c(\sigma (W))=1$),
    then $\sigma (W)$ consists of $m\geq 2$ signs plus
    followed by $d+1-m\geq 2$ signs minus.
    %The polynomial $W$ has
    %exactly one positive root (namely, $1$), so there is exactly one sign
    %hange in the sign pattern defined by~$W$.
    Hence this sign pattern contains a
    configuration~$A$.}
\end{ex}

\begin{rem}
  {\rm The sets $S_d(\sigma (W_*))$ and $T_d(\sigma (W_*))$ can be defined for
    any real monic univariate, not necessarily hyperbolic, polynomial $W_*$.
    In this case the polynomial $W$ is also not necessarily hyperbolic. When
    the polynomials $W_*$ and $W$ are hyperbolic, they have $c(\sigma (W_*))$
    and $c(\sigma (W_*))+1$ positive and $d-2-c(\sigma (W_*))$
    and $d-1-c(\sigma (W_*))$ negative roots respectively,
    see the second paragraph of the Introduction.}
\end{rem}

Theorem~\ref{tmmain} results from the following proposition:

\begin{prop}\label{proptmmain}
  For any real monic univariate polynomial $W_*$ as in (\ref{eqpolyW}),
  every sign pattern of its corresponding set $T_d(\sigma (W_*))$ contains
  at least one of configurations $A$~--~$D$.
  \end{prop}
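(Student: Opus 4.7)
Write $\epsilon_j := {\rm sgn}(u_j)$, so $\epsilon_0 = \epsilon_1 = +$ by Remark~\ref{remconv}. From~(\ref{eqpolyW}),
\[
\sigma(W) = (+,+,\eta_2,\ldots,\eta_{d-2},-\epsilon_{d-3},-\epsilon_{d-2}),
\]
with $\eta_j := {\rm sgn}(u_j - u_{j-2})$. The key algebraic fact is that $\eta_j = \epsilon_j$ whenever $\epsilon_j \ne \epsilon_{j-2}$ (since then $u_j$ and $-u_{j-2}$ have the same sign), while $\eta_j$ is a free sign when $\epsilon_j = \epsilon_{j-2}$. My plan is to argue by contradiction: assume some $\sigma(W) \in T_d(\sigma(W_*))$ contains none of $A$--$D$ and derive a contradiction by a backward-propagating cascade of forced flips.

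\textbf{The default baseline.} First, I set $\eta_j := \epsilon_j$ for every $j$ and call the resulting pattern $\sigma(W)^{\rm def}$. A direct count of sign changes yields $c(\sigma(W)^{\rm def}) = c(\sigma(W_*)) + 1$, so $\sigma(W)^{\rm def}$ lies in $T_d(\sigma(W_*))$. Its last four entries $(\epsilon_{d-3}, \epsilon_{d-2}, -\epsilon_{d-3}, -\epsilon_{d-2})$ form exactly one of $A, B, C, D$ in each of the four choices of $(\epsilon_{d-3}, \epsilon_{d-2})$, so the default already contains a forbidden configuration. Every other element of $T_d$ arises from $\sigma(W)^{\rm def}$ by flipping a subset of the free $\eta_j$; a single flip at position $j$ alters $c$ by $2 - 2(\delta_{j-1} + \delta_j) \in \{-2, 0, +2\}$ (with $\delta_i \in \{0,1\}$ the current sign-change indicator at position $i$), so the flips must collectively preserve $c$ for the pattern to remain in $T_d$.

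\textbf{The cascade.} Next, I examine the rightmost 4-tuple $(\eta_{d-3}, \eta_{d-2}, -\epsilon_{d-3}, -\epsilon_{d-2})$ of $\sigma(W)$. The assumption that it avoids $A$--$D$ forces, via a case check on the four choices for $(\epsilon_{d-3}, \epsilon_{d-2})$, at least one of $\eta_{d-3}, \eta_{d-2}$ to differ from its default---which is permitted only under the equality $\epsilon_{d-3} = \epsilon_{d-5}$ or $\epsilon_{d-2} = \epsilon_{d-4}$, respectively. If no such equality holds, the flip is unavailable and the configuration persists, already giving the contradiction. Otherwise the flip is performed; the 4-tuple at positions $d-4, d-3, d-2, d-1$ must then again avoid $A$--$D$, which in the typical sub-case forces a further flip among the free $\eta_j$ within those positions, under a further $\epsilon$-equality. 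Iterating, a chain of forced flips propagates leftward along one or both of the even-indexed and odd-indexed subsequences of $\epsilon$'s, each step imposing a new equality $\epsilon_j = \epsilon_{j-2}$ together with a neutrality constraint to preserve $c$.

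\textbf{Conclusion and main obstacle.} The cascade cannot continue indefinitely: it terminates either (i) when some required $\epsilon$-equality fails, so the configuration at that 4-tuple persists and we have our contradiction, or (ii) by reaching the leftmost positions, where $s_0 = s_1 = +$ are rigidly fixed and no further flip is available---at which point the accumulated $\epsilon$-equalities, combined with the constrained values of $\eta_2, \eta_3$, force the 4-tuple $(+,+,\eta_2,\eta_3)$ itself to be a configuration of the form $A$ (or a related one). Either outcome contradicts the no-configuration assumption, completing the proof. The main obstacle will be the cascade step: carefully verifying, case by case, that each forced flip is neutral (i.e.\ preserves $c$) given the partial set of flips already performed, and organizing the branching between ``flip $\eta_{d-3}$'' and ``flip $\eta_{d-2}$'' under the two main cases $\epsilon_{d-3} = \epsilon_{d-2}$ and $\epsilon_{d-3} \ne \epsilon_{d-2}$. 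The even- and odd-indexed cascades are largely parallel but must be handled separately.
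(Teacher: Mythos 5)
Your setup is correct, and the ``default baseline'' observation is genuinely nice: writing $\sigma(W)=(+,+,\eta_2,\dots,\eta_{d-2},-\epsilon_{d-3},-\epsilon_{d-2})$ with $\eta_j={\rm sgn}(u_j-u_{j-2})$, noting that $\eta_j$ is forced to equal $\epsilon_j$ unless $\epsilon_j=\epsilon_{j-2}$, and checking that the default choice $\eta_j=\epsilon_j$ makes the last four entries $(\epsilon_{d-3},\epsilon_{d-2},-\epsilon_{d-3},-\epsilon_{d-2})$, which is one of $A$--$D$ for each of the four sign choices (a $4$-tuple is one of $A$--$D$ exactly when its third and fourth entries are the negatives of its first and second). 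Your count $c(\sigma(W)^{\rm def})=c(\sigma(W_*))+1$ is also right. This is a genuinely different organization from the paper, which proceeds by induction on $d$: base cases $3\le d\le 6$ in Examples~\ref{exd3}--\ref{exd6}, then a comparison of the coefficient lists (\ref{eqcompare}) of $W_d$ and $W_{d+1}$, so that a configuration sitting at positions $m\le d-4$ survives automatically and only configurations in the last few positions require a finite case check.

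However, there is a genuine gap: the entire content of the argument is the ``cascade,'' and you do not carry it out --- you yourself flag it as ``the main obstacle.'' The difficulty is real, not mere bookkeeping. First, the propagation is not deterministic in the way the sketch suggests: if, say, only $\eta_{d-2}$ is flipped (so $\epsilon_{d-2}=\epsilon_{d-4}$), the $4$-tuple at positions $d-4,\dots,d-1$ is a configuration iff $\eta_{d-4}=\epsilon_{d-4}$, so the next forced flip is at $d-4$, skipping $d-3$; the tuple at positions $d-5,\dots,d-2$ is then automatically safe, but the one at positions $d-6,\dots,d-3$ is a configuration iff $\eta_{d-6}=\epsilon_{d-4}$ and $\eta_{d-5}=-\epsilon_{d-3}$, which involves two so-far unconstrained free signs --- the analysis branches rather than marching along one parity class. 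Second, configuration-avoidance alone cannot close the argument: for $d=5$ and $\sigma(W_*)=(+,+,+,+)$ the achievable pattern $(+,+,-,+,-,-)$ contains none of $A$--$D$ and is excluded \emph{only} because it has three sign changes, i.e.\ only by the defining condition $c(\sigma(W))=c(\sigma(W_*))+1$ of $T_d$. So the $c$-preservation constraint must be interleaved with the cascade at every step (this is precisely the role of Case~II in the paper's proof), and you give no mechanism for doing so beyond noting that flips change $c$ by $0$ or $\pm2$. Third, the terminal contradiction at the left boundary --- that the accumulated equalities force $(+,+,\eta_2,\eta_3)$ to be configuration $A$ --- is asserted, not derived. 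Until the cascade step is actually proved, with the branching and the sign-change count controlled simultaneously, what you have is a plausible plan rather than a proof.
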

    
Hence if the sign pattern of the generic hyperbolic polynomial $W$ contains
none of configurations $A$~--~$D$, then $W$ is not representable in the
form~(\ref{eqpolyW}).

We give first examples of polynomials $W$ in some of which we use a different
notation. These examples will be used in the proof
of Proposition~\ref{proptmmain}.

%\begin{ex}\label{explus}
%  {\rm Suppose that $d\geq 3$.
%    If $u_j>0$, $1\leq j\leq d-2$ (see (\ref{eqpolyW}),
%    then all roots of the polynomial $W_1$
%    are negative. The first two and the last two coefficients of $W$ equal
%    $1>0$, $u_1>0$ and $-u_{d-3}<0$, $-u_{d-2}<0$ respectively.
%    The polynomial $W$ has
%    exactly one positive root (namely, $1$), so there is exactly one sign
%    change in the sign pattern defined by~$W$. Hence this
%    sign pattern contains a
%    configuration~$A$.}
%  \end{ex}

\begin{ex}\label{exd3}
  {\rm Set $d:=3$. Consider the polynomial $(x^2-1)(x+a)=x^3+ax^2-x-a$,
    $a\in \mathbb{R}^*$. For $a>0$ (resp. for $a<0$), it defines the
    sign pattern
    $(+,+,-,-)$ (resp. $(+,-,-,+)$) which is configuration~$A$ (resp.
    configuration~$C$).}
\end{ex}

\begin{ex}\label{exd4}
  {\rm Set $d:=4$. Consider for $a$, $b\in \mathbb{R}^*$, $a>0$, the polynomial}

  $$(x^2-1)(x^2+ax+b)=x^4+ax^3+(b-1)x^2-ax-b~.$$
  {\rm For $b>0$, it defines one of the sign patterns $(+,+,\pm ,-,-)$
    both of which contain configuration $A$.
    For $b<0$, it defines the sign pattern
    $(+,+,-,-,+)$ containing configurations $A$ and~$C$.}
\end{ex}

\begin{ex}\label{exd5}
  {\rm Set $d:=5$. Consider for $a$, $b$, $r\in \mathbb{R}^*$, $a>0$,
    the polynomial}

  $$W:=(x^2-1)(x^3+ax^2+bx+r)=x^5+ax^4+(b-1)x^3+(r-a)x^2-bx-r~.$$
  {\rm For $b>0$ and $r<0$, it defines one of the
    sign patterns $(+,+,\pm ,-,-,+)$; 
    for $b<0$ and $r>0$, it defines one of the sign patterns
    $(+,+,-,\pm ,+,-)$; for $b<0$ and $r<0$, it defines the sign pattern
    $(+,+,-,-,+,+)$. Each of these sign patterns contains at least one of
    configurations $A$~--~$D$.}

  {\rm For $b>0$ and $r>0$, one obtains the sign patterns
    $(+,+,\pm ,\pm ,-,-)$ of which
    only $(+,+,-,+,-,-)$ contains neither of
    configurations $A$~--~$D$. However this sign pattern has three
    sign changes, so it does not belong to the set $T_5(\sigma (W_*))$
    with $\sigma (W_*)=(+,+,+,+)$, see Example~\ref{explus}.}
  \end{ex}

\begin{ex}\label{exd6}
  {\rm Set $d:=6$.
    Consider for $a$, $b$, $r$, $g\in \mathbb{R}^*$, $a>0$, the polynomial}

  $$(x^2-1)(x^4+ax^3+bx^2+rx+g)=x^6+ax^5+(b-1)x^4+(r-a)x^3+(g-b)x^2-rx-g~.$$
  {\rm We give a list of cases in each of which either the
    sign pattern $c(\sigma (W))$ thus obtained contains at least
    one of configurations $A$~--~$D$ or the condition
    $c(\sigma (W))=c(\sigma (W_*))+1$ is violated (this takes place
    in the sign pattern $(+,+,-,+,-,-,+)$ of case 7)):}
  \vspace{1mm}
  
  {\rm 1) $b<0$, $r>0$, $g>0$: $(+,+,-,\pm ,+,-,-)$;}
  \vspace{1mm}
  
  {\rm 2) $b<0$, $r>0$, $g<0$: $(+,+,-,\pm ,\pm ,-,+)$;}
  \vspace{1mm}
  
  {\rm 3) $b<0$, $r<0$, $g>0$: $(+,+,-,-,+,+,-)$;}
  \vspace{1mm}
  
  {\rm 4) $b<0$, $r<0$, $g<0$: $(+,+,-,-,\pm ,+,+)$;}
  \vspace{1mm}
  
  {\rm 5) $b>0$, $r<0$, $g>0$: $(+,+,\pm ,-,\pm ,+,-)$;}
  \vspace{1mm}
  
  {\rm 6) $b>0$, $r<0$, $g<0$: $(+,+,\pm ,-,-,+,+)$;}
  \vspace{1mm}
  
  {\rm 7) $b>0$, $r>0$, $g<0$: $(+,+,\pm ,\pm ,-,-,+)$.}
  \vspace{1mm}
  
  {\rm The only case not included in this list is  
$b>0$, $r>0$, $g>0$. It is covered by Example~\ref{explus}.}
\end{ex}

%Next, we prove the following proposition:

%\begin{prop}\label{proptmmain}
%  Suppose that $d\geq 4$ and that among the coefficients $u_j$,
%  $2\leq j\leq d-2$, there is at
%  least one negative. Then the SP defined by a polynomial $W$ of the form
%  (\ref{eqpolyW}) contains
%  at least one of configurations $A$~--~$D$.
%  \end{prop}

\begin{proof}[Proof of Proposition~\ref{proptmmain}]
  We prove Proposition~\ref{proptmmain} by induction on~$d$. The induction base
  are the cases $3\leq d\leq 6$ considered in
  Examples~\ref{exd3}~--~\ref{exd6}.

  We compare the coefficients of the polynomial $W$ for two consecutive degrees,
  $d$ and $d+1$, $d\geq 6$. 
  We denote these polynomials by $W_d$ and $W_{d+1}$ and their corresponding
  polynomials $W_*$ (see~(\ref{eqpolyW})) by $W_{d,*}$ and $W_{d+1,*}$.  
  The coefficients of $W_d$ and $W_{d+1}$ are:

  \begin{equation}\label{eqcompare}
    \begin{array}{rrrrcrrrr}
      1&u_1&u_2-1&\ldots &u_{d-3}-u_{d-5}&u_{d-2}-u_{d-4}&-u_{d-3}&-u_{d-2}&\\ \\
      1&u_1&u_2-1&\ldots &u_{d-3}-u_{d-5}&u_{d-2}-u_{d-4}&u_{d-1}-u_{d-3}&-u_{d-2}
      &-u_{d-1}~.\end{array}
  \end{equation}

\begin{defi}\label{defiM}
  {\rm We say that a sign pattern $\sigma ^{\bullet}$
    of length $d+1$ belongs to the class
      $M_d(m)$, $1\leq m\leq d-2$, if it contains one of configurations
      $A$~--~$D$ in positions $m$, $m+1$, $m+2$, $m+3$.
    In particular, if $\sigma ^{\bullet}\in M_d(d-4)$, then the following
    coefficients of $\sigma ^{\bullet}$ form one of configurations $A$~--$D$:}

  $$ u_{d-5}-u_{d-7}~,~~~\, u_{d-4}-u_{d-6}~,~~~\, u_{d-3}-u_{d-5}~~~\,
  {\rm and}~~~\, u_{d-2}-u_{d-4}~.$$
  {\rm The coefficient $u_{d-2}-u_{d-4}$ is the rightmost of the coefficients
  which are the same for $W_d$ and $W_{d+1}$, see~(\ref{eqcompare}).}
  \end{defi}

  \begin{rem}\label{remcompare}
    {\rm When passing from $W_d$ to $W_{d+1}$, one deduces from
      (\ref{eqcompare}) that:
      \vspace{1mm}
      
  {\em i)} Changes occur
  only among the coefficients at the right end.
  Namely, the coefficient $-u_{d-3}$ becomes $u_{d-1}-u_{d-3}$ and the last
  coefficient $-u_{d-1}$ is added.
  \vspace{1mm}
  
  %{\em ii)} If sgn$(u_{d-2})=-$sgn$(u_{d-1})$,
  %then sgn$(u_{d-2})=-$sgn$(u_{d-1})$
  %The quantities $c(\sigma (W_{d,*}))$ and $c(\sigma (W_d))$
  %either 
  %simultaneously increase by $1$ (when sgn$(u_{d-2})=-$sgn$(u_{d-1})$)
  %or do not change
  %(when sgn$(u_{d-2})=$sgn$(u_{d-1})$).
  %\vspace{1mm}
  
  {\em ii)} If the sign pattern $\sigma (W_d)$ is in the class
  $M_d(m_0)$ for some
  $1\leq m_0\leq d-4$, then $\sigma (W_{d+1})$ is in the class $M_{d+1}(m_0)$,
  see Definition~\ref{defiM}.}
    \end{rem}

  By induction hypothesis the sign pattern $\sigma (W_d)$ belongs to
  at least one of the classes $M_d(m)$. If $m\leq d-4$, by {\em ii)}
  of Remark~\ref{remcompare}, one has $\sigma (W_{d+1})\in M_{d+1}(m)$.
  Therefore we need to
  consider only the cases

  $${\rm I)}~\sigma (W_d)\in M_d(d-2)~~~\, \, {\rm and}~~~\, \, 
    {\rm II)}~M_d(d-2)\not\ni\sigma (W_d)\in M_d(d-3)~.$$ 

      {\em Case} I). Suppose first that sgn$(u_{d-1}-u_{d-3})=$sgn$(-u_{d-3})$. 
        This is true when, but not only when sgn$(u_{d-1})=-$sgn$(u_{d-3})$. 
        Hence all signs of the sign pattern $\sigma (W_{d+1})$ except
          the last one coincide with the corresponding signs
          of the sign pattern $\sigma (W_d)$, so 
            $\sigma (W_{d+1})\in M_{d+1}(d-2)$.

          Suppose that sgn$(u_{d-1}-u_{d-3})=-$sgn$(-u_{d-3})$.
          Hence sgn$(u_{d-1})=$sgn$(u_{d-3})$.
   We list the possible
   last four signs of $\sigma (W_d)$ in the first line and the
     corresponding last five signs of $\sigma (W_{d+1})$ in the second line:

       $$\begin{array}{llll}
         (+,+,-,-)&(-,-,+,+)&(+,-,-,+)&(-,+,+,-)\\ \\
         (+,+,+,-,-)&(-,-,-,+,+)&(+,-,+,+,-)&(-,+,-,-,+)~.\end{array}$$
       In all four cases the sign pattern $\sigma (W_{d+1})$ contains one of
         configurations $A$~--~$D$ in its last four positions.

         {\em Case} II). We list
         the last five signs of $\sigma (W_d)$ in the first line and the
         corresponding last six signs of $\sigma (W_{d+1})$ in the second line:

         $$\begin{array}{llll}
         (+,+,-,-,-)&(-,-,+,+,+)&(+,-,-,+,-)&(-,+,+,-,+)\\ \\
         (+,+,-,+,-,-)&(-,-,+,-,+,+)&(+,-,-,-,-,+)&(-,+,+,+,+,-)~.\end{array}$$
         In the first two cases one has sgn$(u_{d-1})=$sgn$(u_{d-2})$, so
         $c(W_{d,*})=c(W_{d+1,*})$. Therefore one should have

         $$c(W_{d+1,*})+1=c(W_{d+1})=c(W_d)=c(W_{d,*})+1~.$$ 
         However one has $c(W_{d+1})=c(W_d)+2$.

         In the last two cases
         the equality sgn$(u_{d-1})=-$sgn$(u_{d-2})$ implies 
         $c(W_{d+1,*})=c(W_{d,*})+1$, so one should
         have $c(W_{d+1})=c(W_d)+1$, but one has $c(W_{d+1})=c(W_d)-1$.
         This contradiction proves the proposition.
\end{proof}

\end{document}